\theoremstyle{plain}
\newtheorem{theo}{Theorem}[section]
\newtheorem{thm}{Theorem}[section]
\newtheorem{lema}[theo]{Lemma}
\theoremstyle{definition}
\newtheorem{defi}[theo]{Definition}
\theoremstyle{remark}
\begin{document}

\title{Constructive decomposition of a function of two variables as a sum of functions of one variable}

\author{Eva Trenklerov\'a}

\address{Department of Computer Science, Faculty of Science, 
P.J. \v{S}af\'arik University, Jesenn\'a 5, 04001 Ko\v{s}ice, Slovakia}
\email{eva.trenklerova@upjs.sk}
\thanks{The author was supported by grants VEGA 1/3002/06 and VEGA 1/3128/06}

\subjclass[2000]{Primary  26B40, 54C30; Secondary 54F99, 54C25}


\date{June 4, 2007}

\keywords{Basic embedding; Plane compactum; Kolmogorov representation theorem;
Hilbert's 13th problem; Finite approximation of sets}

\begin{abstract}	
Given a compact set $K$ in the plane, which does not contain any triple of points forming a vertical
and a horizontal segment, and a map $f\in C(K)$,
we give a construction of functions $g,h\in C(\mathbb R)$ such that
$f(x,y)=g(x)+h(y)$ for all $(x,y)\in K$. This provides a constructive proof of a part
of Sternfeld's theorem on basic embeddings in the plane. In our proof the set $K$
is approximated by a finite set of points.
\end{abstract}
\maketitle



\newcommand{\cesta}[4]{{#1}_{#2} {#1}_{#3}\ldots {#1}_{#4}}
\def\:{\mkern1mu\colon}
\newcommand{\fun}[3]{\mbox{${#1} \: {#2} \to {#3}$}}
\newcommand{\ww}[1]{w_{#1}}
\newcommand{\wplus}{\mathbf w_+}
\newcommand{\wminus}{\mathbf w_-}

\newcommand{\V}[1]{V^{#1}}
\newcommand{\Vplus}[1]{V_+} 
\newcommand{\Vminus}[1]{V_-}

\newcommand{\Kx}{K_x}
\newcommand{\Ky}{K_y}
\newcommand{\p}{p}
\newcommand{\q}{q}

\newcommand{\gx}{g}
\newcommand{\hy}{h}
\newcommand{\gxe}{g'}
\newcommand{\hye}{h'}

\newcommand{\kmn}{k^{m_n}}

\newcommand{\Gx}{G^n}
\newcommand{\Hy}{H^n}

\newcommand{\pom}{\gamma}

\newcommand{\vare}{\varepsilon}

\newcommand{\Rset}{\mathbb R}
\newcommand{\Nset}{\mathbb N}
\newcommand{\card}{\mathrm{Card}}
\newcommand{\Rdva}{\mathbb R^2}
\newcommand{\RR}{\mathbb R}

\newcommand{\aarray}[4]{\{{#1}_{#2}, {#1}_{#3},\ldots ,{#1}_{#4}\}}

\newcommand{\dfe}{\delta_f(\varepsilon)}
\newcommand{\ve}{\varepsilon}

\section{Introduction}

An embedding $\fun \varphi K {\mathbb R^k}$
of a compactum (compact metric space) $K$ in the $k$-dimensional Euclidean space $\mathbb R^k$
is called a {\em basic embedding\/} provided that for each continuous real-valued function $f\in C(K)$, there exist
 continuous real-valued functions of one real variable  $g_1,\ldots,g_k \in C(\mathbb R)$ such that
$f(x_1,\ldots,x_{k}) = g_1(x_1)+\ldots+g_{k}(x_{k})$ for all points
$(x_1,\ldots,x_{k})\in \varphi(K)$. We also say, that the set $\varphi(K)$ is {\em basically embedded\/} in 
$\mathbb R^k$.

The question of the existence of basic embeddings was already implicitly
contained in Hilbert's 13th problem~\cite{hilbert}: Hilbert conjectured that not all continuous functions
of three variables were expressible as sums and superpositions of continuous functions
of a smaller number of variables.

Ostrand~\cite{ostrand} proved that each $n$-dimensional compactum 
can be basically embedded in $\mathbb R^{2n+1}$ for $n\geq 1$.
His result is an easy generalization of results of Arnold~\cite{ArnoldHilbert, ArnoldHilbertOne} and Kolmogorov~\cite{kolmogorovOne,kolmogorov}.

Sternfeld~\cite{sternfeld1} proved that the parameter $2n+1$ is the best possible in a very strong sense:
namely, that no $n$-dimensional compactum can be basically embedded in $\mathbb R^{2n}$ for $n\geq 2$.
Ostrand's and Sternfeld's results thus characterize compacta basically embeddable in $\mathbb R^k$ for $k\geq 3$.
Basic embeddability in the real line is trivially equivalent to embeddability.
The remaining problem of the characterization of compacta basically embedded in
  $\mathbb R^2$ was already raised by Arnold~\cite{ArnoldProblemSix}
and solved by Sternfeld~\cite{sternfeld2}:

\begin{thm}[Sternfeld]\label{v:SternfeldPlane}
Let $K$ be a compactum and let $\fun \varphi K {\mathbb R^k}$ be an embedding. Then

(B) $\varphi$ is a basic embedding 

 if and only if

(A) there exists an $m\in \mathbb N$ such that the set $\varphi(K)$ does not contain an 
array of length $m$.
\end{thm}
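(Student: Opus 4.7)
My plan is to treat the two implications by different methods, linked by the alternating-sum principle for an array $p_1,\dots,p_m$: since consecutive points share a first or second coordinate alternately, for any decomposition $f(x,y)=g(x)+h(y)$ each difference $f(p_i)-f(p_{i+1})$ collapses to a single-variable difference, and the whole alternating sum $\sum_{i=1}^m(-1)^i f(p_i)$ telescopes to a quantity bounded by a fixed multiple of $\|g\|_\infty+\|h\|_\infty$, independently of $m$.

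To prove (B)$\Rightarrow$(A) I would argue contrapositively. If $\varphi(K)$ contains an array $A_m$ of length $m$ for every $m\in\Nset$, Tietze's extension theorem yields $f_m\in C(\varphi(K))$ with $\|f_m\|_\infty\le 1$ and $f_m(p_i)=(-1)^i$ on $A_m$, so that the alternating sum over $A_m$ equals $m$. By the principle above, any decomposition $f_m=g_m\circ\pi_1+h_m\circ\pi_2$ would force $\|g_m\|_\infty+\|h_m\|_\infty\ge cm$ for some absolute constant $c>0$. A diagonal combination $f=\sum_m \lambda_m f_m$ with suitably chosen $\lambda_m\to 0$ then admits no continuous decomposition at all, contradicting (B).

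The converse (A)$\Rightarrow$(B) is the constructive direction advertised in the abstract. Given $f\in C(\varphi(K))$, I would pick a sequence of finite sets $F_n\subseteq\varphi(K)$ that is $\tfrac{1}{n}$-dense in $\varphi(K)$; each $F_n$ inherits the no-array-of-length-$m$ condition. The core lemma to establish is that on such a finite set the discrete problem $f(x,y)=g_n(x)+h_n(y)$ has a solution with $\|g_n\|_\infty+\|h_n\|_\infty\le C_m\|f\|_\infty$, via iterative peeling of points that sit alone in their row or column---a process forced to terminate in at most $m$ rounds by (A). Extending $g_n,h_n$ piecewise linearly to $\Rset$ and passing to a subsequence via Arzel\`a--Ascoli produces the required $g,h\in C(\Rset)$. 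The principal obstacle is equicontinuity: the sup-norm bound is combinatorial and easy, but ensuring that close values of $x$ in $\pi_1(F_n)$ produce close values of $g_n(x)$, uniformly in $n$, requires coordinating the peeling order with the geometry of $\varphi(K)$---this is exactly where the finite-approximation technique developed in the body of the paper plays its role.
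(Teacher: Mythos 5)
Theorem~\ref{v:SternfeldPlane} is not proved in this paper: it is Sternfeld's theorem, cited from~\cite{sternfeld2}, and the paper's own contribution is a constructive proof of the direction (A)$\Rightarrow$(B) only in the special case $m=2$ (Theorem~\ref{v:Basic}), with (B)$\Rightarrow$(A) deferred to~\cite{EvaNeza}. So you are proposing a proof of a statement the paper quotes as background; I will evaluate your sketch on its own terms and against what the paper actually does.

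Your (B)$\Rightarrow$(A) plan is essentially the standard one and is sound in outline: for an array $p_1,\dots,p_m$ the alternating sum $\sum_i(-1)^i f(p_i)$ does telescope, because the $g$-terms cancel in adjacent pairs sharing an $x$-coordinate and the $h$-terms cancel in adjacent pairs sharing a $y$-coordinate, leaving only boundary terms bounded by $2(\|g\|_\infty+\|h\|_\infty)$; with $f\equiv(-1)^i$ on the array the sum is $m$, giving the linear lower bound. The diagonal step needs care (the arrays $A_m$ may overlap, so $f=\sum\lambda_m f_m$ need not restrict to $\lambda_m f_m$ on $A_m$), and the usual fixes are either to extract arrays with disjoint supports or to run a Baire-category/uniform-boundedness argument as in Sternfeld's original proof; this is repairable but not quite automatic.

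Your (A)$\Rightarrow$(B) plan has a genuine gap, which you half-acknowledge but do not close, and it is precisely the content of the paper. Peeling points that sit alone in their row or column of an arbitrary finite $F_n\subseteq\varphi(K)$ gives a combinatorially bounded $g_n,h_n$, but this construction sees only \emph{exact} coincidences of coordinates. Two points of $F_n$ may have $x$-coordinates differing by $10^{-100}$ yet be peeled at different rounds with wildly different $g_n$-values, so the family $\{g_n\}$ has no reason to be equicontinuous and Arzel\`a--Ascoli does not apply. The paper handles exactly this by replacing ``same coordinate'' with ``coordinates within $2/2^n$'' (almost-vertical/horizontal segments), replacing ``array'' with ``almost array,'' and proving the key Lemma~\ref{l:AlmostArray} that, for $n$ large, almost arrays joining a long almost-vertical end to a long almost-horizontal end must be long; this is a compactness statement about $K$, not a property inherited by arbitrary dense finite subsets. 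It then avoids Arzel\`a--Ascoli altogether: it produces an approximate decomposition with explicit error $20\vare$ and a norm bound (Theorems~\ref{v:GOnGraph}--\ref{v:GHOnR}), and upgrades to an exact decomposition via the iterative Theorem~\ref{v:GHExact}. Finally, note the paper achieves this only for $m=2$ and explicitly leaves general $m$ open, so a sketch claiming to deliver all $m$ by peeling plus Arzel\`a--Ascoli overreaches relative to what is known constructively.
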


\begin{defi} An {\em array \/} is a sequence of points $\{z_i\}_{i\in I}$ in the plane,
where $I=\{1,2,\ldots,m\}$ or $I=\mathbb N$, such that for each $i$:
\begin{itemize}
\item $z_i\ne z_{i+1}$ and $[z_i;z_{i+1}]$ is a segment parallel to one of the coordinate axes and
\item the segments $[z_i;z_{i+1}]$ and $[z_{i+1};z_{i+2}]$ are mutually orthogonal.
\end{itemize}
If $I=\{1,2,\ldots,m\}$ then the length of the array is $m-1$.
\end{defi}

Using the geometric description (A),  Skopenkov~\cite{skopenkov} gave a characterization of continua basically embeddable in the 
plane by means of forbidden subsets resembling
Kuratowski's characterization of planar graphs. 
In a similar way Kurlin~\cite{Kurlin00} characterized finite graphs
basically embeddable in $\mathbb R\times T_n$, where $T_n$ is
a star with $n$-rays.
Repov\v s and \v Zeljko~\cite{Zeljko} proved a result concerning the smoothness of functions
in a basic embedding in the plane.

Sternfeld's proof of  the equivalence (A) $\Leftrightarrow$  (B) 
is not direct but
uses a reduction to linear operators. In particular it is not constructive. 
It is therefore desirable to find a straightforward, constructive proof
which will consequently provide an elementary proof
of Skopenkov's and Kurlin's characterizations.
%
A constructive proof of  (B) $\Rightarrow$  (A)  is given
in~\cite{EvaNeza}. 

In this paper we give such an elementary construction, thus proving the implication (A) $\Rightarrow$ (B) 	
provided that $m=2$:

\begin{thm}\label{v:Basic} Let $\fun \varphi K {\mathbb R^2}$ be an embedding of a compactum
$K$ in the plane such that the set $\varphi(K)$ does not contain an array of length two.  
Then for every function $f \in C(\varphi(K))$
 there exist functions  $g,h\in C(\mathbb R)$ such that $f(x,y)=g(x)+h(y)$
for all points $(x,y)\in \varphi(K)$.
\end{thm}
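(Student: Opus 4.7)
My plan is to reduce the continuous problem to a finite one by approximating $K$ by a finite set of points, solving the decomposition problem on the approximation, and passing to a limit. Throughout I identify $K$ with $\varphi(K)$ and work in $\Rdva$.

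First I read off the combinatorial content of the hypothesis. Associate to any $S\subset\Rdva$ the bipartite ``coordinate graph'' whose vertices are the distinct $x$- and $y$-coordinates of points of $S$ and whose edges are the points of $S$ themselves. An array of length two is precisely a path of three consecutive edges in this graph, so the hypothesis on $K$ says that every connected component of its coordinate graph is a \emph{star}: all of its edges share a common $x$- or $y$-coordinate. This is the structural fact that permits a star-by-star definition of $g$ and $h$: within each star one may choose the value of whichever of $g,h$ sits at the common coordinate freely, and then the remaining values are forced by the equation $f=g+h$.

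Next I fix a sequence $\varepsilon_n\searrow 0$ together with finite $\varepsilon_n$-nets $K_n\subset K$. To create the coincidences required for a useful star structure, I identify $x$-coordinates (and separately $y$-coordinates) of points of $K_n$ lying within some auxiliary threshold $\delta_n\to 0$, producing a finite ``quantized'' set $\tilde K_n$. A compactness argument using the absence of an array of length two in $K$ shows that, if $\delta_n$ is chosen small enough relative to the geometry of $K$, the set $\tilde K_n$ still contains no L-shape; otherwise a subsequential limit of forbidden triples would yield an array of length two in $K$. The star-by-star prescription then produces functions $\tilde g_n$ on the merged $x$-coordinates and $\tilde h_n$ on the merged $y$-coordinates with $\tilde g_n(x)+\tilde h_n(y)$ agreeing with $f$ on $\tilde K_n$ up to an error controlled by $\delta_n$ and the modulus of continuity of $f$.

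The main obstacle, and the technical heart of the argument, is the choice of the free constants at the star centers so that $\tilde g_n$ and $\tilde h_n$ are equicontinuous with a modulus depending only on $f$ and on $K$, not on $n$. The idea is to fix a base point and propagate values from one star to another along chains of $\delta_n$-close vertical and horizontal slabs of $K$, bounding each step by the oscillation of $f$ on $\delta_n$-close pairs of points of $K$; the no-array-of-length-two hypothesis is exactly what rules out a cyclic inconsistency of this propagation. Once uniform equicontinuity is in hand I extend $\tilde g_n$ and $\tilde h_n$ piecewise linearly to functions $g_n,h_n\in C(\RR)$ with the same modulus, apply Arzel\`a--Ascoli to pass to a subsequence converging uniformly on compacta to a pair $(g,h)\in C(\RR)\times C(\RR)$, and verify $g(x)+h(y)=f(x,y)$ at each $(x,y)\in K$ by approximating $(x,y)$ with points of $\tilde K_n$ and passing to the limit.
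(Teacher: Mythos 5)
Your combinatorial reading of the hypothesis is correct (no array of length two means the coordinate graph is a disjoint union of stars), and your overall plan---finite approximation, propagation of values, pass to a limit---is in the same spirit as the paper's. But there is a genuine gap at the step you treat as a routine compactness argument: the claim that the quantized set $\tilde K_n$ contains no L-shape once $\delta_n$ is small. The three points of an L in $\tilde K_n$ need only be separated, in the two ``perpendicular'' directions, by roughly the quantization resolution $\delta_n$; as $n\to\infty$ those separations shrink with $\delta_n$, so a subsequential limit of such triples in $K$ can degenerate to a single point (or to a collinear pair) rather than to an array of length two. Thus ``no L in $K$'' does not imply ``no L in $\tilde K_n$'', and the component-by-component star structure you rely on is generally unavailable. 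The paper runs into exactly this obstruction and settles for the quantitatively weaker Lemma~\ref{l:AlmostArray}: an almost array in $V^n$ whose first segment is a \emph{long} (at least $\delta$, the uniform-continuity threshold of $f$) almost-vertical one and whose last is a long almost-horizontal one must have at least $\lfloor\|f\|/\varepsilon\rfloor$ steps. The entire construction of $\gamma$ and $G^n$ in the proof of Theorem~\ref{v:GOnGraph}---the sentinel vertex $\mathbf w_+$, the auxiliary graph whose edges carry $\varepsilon$-jumps of $f$, and the definition of $\gamma$ via graph distance---exists precisely to convert that weaker fact into an approximate decomposition. Your star-by-star prescription cannot be run in its place, because $\tilde K_n$ may well have chains of tiny L-shapes joining far-apart stars.

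The second gap is at the passage to the limit. You want equicontinuity of $\tilde g_n,\tilde h_n$ with a modulus depending only on $f$ and $K$, so that Arzel\`a--Ascoli applies, but no such a priori modulus is available and the ``propagation'' sketch does not supply one; the exact $g,h$ that one ultimately produces need satisfy no uniform modulus bound beyond continuity. The paper avoids this entirely: for each $\varepsilon>0$ it produces a $20\varepsilon$-approximate decomposition on $K$ with the uniform norm bounds $\|g\|\le\|f\|$, $\|h\|\le 2\|f\|$ (Theorem~\ref{v:GHOnR}), and then invokes the abstract Theorem~\ref{v:GHExact} (Rudin, Theorem~4.13(b)$\Rightarrow$(c)), which turns a norm-bounded approximation scheme into an exact decomposition by an elementary iteration. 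Replacing your Arzel\`a--Ascoli step with that iteration is what actually closes the argument.
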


The main part of our proof consists in
finding an approximate decomposition of a given function $f$ as $g+h$.
The functions $g$, $h$ are defined on a finite approximation $\V n$ of $\varphi(K)$.
Then they are linearly
extended to $\mathbb R$.
Apart from two steps, where we asset the existence of certain constants,
this part of the proof is constructive.
The existence of an exact decomposition follows by an elementary
iterative procedure.

Until now, no constructive decomposition of $f$ as $g+h$ on compacta in the plane satisfying
(A) of Theorem~\ref{v:SternfeldPlane} has been found,
not even in the simplest case, when the compactum satisfies (A) with $m=2$.

Our result resembles 
representation theorems of Arnold~\cite{ArnoldHilbert, ArnoldHilbertOne}, Kolmogorov~\cite{kolmogorovOne,kolmogorov} and Ostrand~\cite{ostrand}. The proofs are similar in that we also construct a sequence of finite families of squares. But different from these proofs, where the squares (or cubes in higher dimensions)
are connected only with the dimension of the set in question,
here the squares mimic the property that the set does not contain an array of length two.


In the paper~\cite{Zeljko} the authors give the decomposition for finite graphs basically embedded in the plane:
according to the results of~\cite{skopenkov, RepovsOpen98}, a finite graph can be basically embedded in the plane
if and only if it can be embedded in a special graph $R_n$ for some $n$. 
The authors of~\cite{Zeljko} inductively define an embedding $\fun \varphi {R_n} {\mathbb R^2}$.
For a given function $f\in C(\varphi(R_n))$ they define the maps $g,h\in C(\mathbb R)$ inductively again,
starting from a well chosen subset of $\varphi(R_n)$. 

Although the sets we are dealing with do not contain arrays of length two, they can be still
\lq\lq arbitrarily bad\rq\rq. 
In particular we are not
able to choose a suitable subset to start the construction on. 
%
%
Thus it turns out,  that even if a set $\varphi(K)\subseteq \mathbb R^2$ satisfies the simplest
version of condition (A), 
a constructive decomposition of a function $f\in C(\varphi(K))$ is a non-trivial problem.
We believe, that the proof can be modified  to obtain a constructive proof of
the implication (A) $\Rightarrow$ (B) for an arbitrary $m \in \mathbb N$. 

The author would like to thank Du\v san Repov\v s and Arkadyi Skopenkov for the inspiration for
this paper, Lev Bukovsk\'y for support and especially Ne\v za Mramor-Kosta for endless conversations on the topic and
invaluable advice.

The author would also like to thank the anonymous referee for  valuable suggestions and comments.

\section{Notation and conventions}\label{s:Notation}

Throughout the text we fix an embedding $\fun \varphi K {\mathbb R^2}$ of a compactum
$K$ in the plane such that the set $\varphi(K)$ does not contain an array of length two. 
For simplicity of notation we identify the set $K$ and its homeomorphic image $\varphi(K)$
and we speak about a set $K\subseteq \mathbb R^2$.
Let $f$ be from $C(K)$ and $\vare>0$ be the desired approximation constant.
Since $f$ is continuous on the compact set $K$, it is uniformly continuous there. 
Therefore, there exists a positive real $\delta=\delta(K,f,\vare)>0$ such that for all points $z,z'\in K$ if
$|z-z'|<\delta$ then $|f(z)-f(z')|<\varepsilon.$ We fix this $\delta$ as well.

The distance in $\mathbb R^2$ is defined as
$|(x,y)-(x',y')|=\max\{|x-x'|, |y-y'|\}$ for $(x,y),(x',y')\in \mathbb R^2$. By $\fun {\p,\q} {\mathbb R^2} {\mathbb R}$ we denote  the vertical and horizontal orthogonal projections:
$\p(x,y)=x$, $\q(x,y)=y$. 

\section{Idea of the proof and the main statements}\label{s:Ideas}
Our proof of Theorem~\ref{v:Basic} mimics the following construction of the functions $\gx,\hy$ which
works for  certain types of sets $K$ (for example graphs, considered in~\cite{Zeljko}). 
 Denote by $\Kx$ the set of all points $(x,y)\in K$ which have a neighbor in the
vertical direction in $K$, i.e. $\Kx=\{(x,y)\in K|\exists (x,y')\in K, y\ne y'\}.$ Similarly define $\Ky.$
Assume that both sets $\Kx$ and $\Ky$ are closed.

Since $K$ does not contain an array of length two, the functions $\p$ and $\q$ are injective
on $\Ky$ and $\Kx$, respectively, and the sets $\Kx$ and $\Ky$ are disjoint.
For each point $x\in \p(\Ky)$ let $\gx(x)=f(x,\p^{-1}(x))$ and for each point $x\in \p(\Kx)$ let $\gx(x)=0$. 
Extend $\gx$ continuously to $\mathbb R$.
The function $\hy$ is defined in the following way: for each point $y\in q(K)$  pick an arbitrary point
$(x,y)\in K$ and let $\hy(y)=f(x,y)-\gx(x)$. It is easily seen that $\hy$ is continuous. 
We extend $\hy$  to $\mathbb R$.
%

We have defined the functions $\gx$ and $\hy$ first on the sets $\Kx$ and $\Ky$. In general
these sets are not closed, and the set $K$ can be so \lq\lq bad\rq\rq\ that 
we cannot find a suitable set to begin the definitions.

The main part of the proof consists of three steps. 

\subsection*{Step 1} For each $n$ we construct the set $\V n$ approximating $K$.
Consider the lattice 
$(i/2^n,j/2^n)$ with $i,j\in \mathbb Z.$
For each square $\left[ {i}/{2^n};{(i+1)}/{2^n} \right) \times \left[ {j}/{2^n};{(j+1)}/{2^n} \right)$ 
which intersects the set $K$ we choose one point
from the intersection of this square with the set $K$.
The set $\V n$ consists of all the chosen points.

We shall call a segment $[(u_1,v_1);(u_2,v_2)]$ given by
 a pair of points $(u_1,v_1)$, $(u_2,v_2)\in \V n$  {\em almost vertical in $\V n$\/}
if $|u_1-u_2|<{2}/{2^n}$
and {\em almost horizontal in $\V n$\/}
if  $|v_1-v_2|<{2}/{2^n}$.

Points which are the ends of almost vertical or horizontal segments 
are \lq\lq near\rq\rq\ to each other in the vertical or horizontal direction,
respectively.

Arrays in the set $K$ are \lq\lq deformed\rq\rq\ in the approximating finite set $\V n$
so we generalize the notion of an array, in the following way.

\begin{defi}\label{d:AlmostArray} A sequence of pairwise different points $\{w_1,w_2,\ldots,w_m\}$ from the set $\V n$
is said to form an {\em almost array in $\V n$\/} if each pair of consecutive points 
$w_i$, $w_{i+1}$ forms an almost vertical or horizontal segment
 in $\V n$. The {\em length\/} of the almost array is defined to be $m-1$.
\end{defi}

\subsection*{Step 2}

In this step we define approximations $\Gx$ and $\Hy$ of $\gx$ and $\hy$ on the set $\V n$.
This step contains the major part of the proof and consists of proving three statements.

If the distance of two points from $\V n$ is smaller than $\delta$
then the difference of $f$ between them is bounded by $\vare$. As we are approximating up to $\vare$,
such points are \lq\lq almost the same\rq\rq\ for us. 

\begin{defi}\label{d:ShortLong} A segment $[z_1;z_2]$ with $z_1,z_2\in \Rdva$ is said to be
{\em long\/} if $|z_1-z_2|\geq \delta$ and it is said to be 
{\em short\/} if $|z_1-z_2|< \delta$.
\end{defi}

The set of all points from $\V n$ which are the ends of
the long almost vertical segments in $\V n$ is the analogue of the set $\Kx$ and the set of all points from $\V n$ which are the ends of
the long almost horizontal segments in $\V n$ is the analogue of the set $\Ky$.

\begin{theo}\label{v:GOnGraph}  There exists an $n_1$ such that for all $n\geq n_1$
a function
$\fun \Gx {\p(\V n)} {\mathbb R}$ satisfying the following requirements exists.
\begin{enumerate}
  \item\label{v:GOnGraph_Jedna} 
  \begin{enumerate}
   \item\label{v:GOnGraph_JednaA}  $|\Gx(u_1)-\Gx(u_2)|\leq 3\varepsilon$ for each short segment $[(u_1,v_1);(u_2,v_2)]$ in $\V n$
       \item\label{v:GOnGraph_JednaB}  $|\Gx(u)-f(u,v)|\leq 2\vare$ for each $(u,v)\in \V n$ which is the end of a long almost horizontal segment in $\V n$ 
    \item\label{v:GOnGraph_JednaC} $|\Gx(u)|\leq \vare$ for each $(u,v)\in \V n$ which is the end of a long almost vertical segment in $\V n$ 
    \end{enumerate}
   \item\label{v:GOnGraph_Dva}  $||\Gx||\leq ||f||$.
\end{enumerate}
\end{theo}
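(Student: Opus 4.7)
My plan is to combine a single compactness-exclusion step with an explicit three-clause definition of $\Gx$. The compactness step is the only non-constructive element and uses crucially that $K$ contains no array of length two; everything else is a direct verification by triangle inequality and uniform continuity.

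Let $V^n_y$ (respectively $V^n_x$) denote the set of points in $\V n$ that are ends of long almost horizontal (respectively almost vertical) segments in $\V n$. The first step is to choose $n_1$ so that for every $n\geq n_1$ there is no pair $(u_1,v_1),(u_2,v_2)\in \V n$ with $[(u_1,v_1);(u_2,v_2)]$ almost vertical or almost horizontal in $\V n$ while $u_1\in \p(V^n_y)$ and $u_2\in \p(V^n_x)$. In particular $V^n_x\cap V^n_y=\emptyset$ and $\p(V^n_x)\cap \p(V^n_y)=\emptyset$ for $n\geq n_1$. I prove this by contradiction: if the exclusion failed for arbitrarily large $n$, then, passing to a subsequence and using compactness of $K$, the points $(u_1,v_1)$, $(u_2,v_2)$, the $V^n_y$-witness $(u_1,v_1^\sharp)\in V^n_y$ together with its long almost horizontal partner, and the $V^n_x$-witness $(u_2,v_2^\sharp)\in V^n_x$ together with its long almost vertical partner, all converge in $K$. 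The slacks of order $2/2^n$ collapse to exact axis-parallel alignment in the limit, while the lower bounds of size $\geq\delta$ survive; a short case analysis on possible coincidences among these limit points then exhibits, in every subcase, three distinct points of $K$ forming a genuine array of length two, contradicting the hypothesis.

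Next I define $\Gx$ on $\p(\V n)$ by the following three-way rule: for each $u\in \p(V^n_y)$ fix a representative $(u,v^\sharp)\in V^n_y$ and set $\Gx(u):=f(u,v^\sharp)$; for each $u\in \p(V^n_x)$ set $\Gx(u):=0$; and for every remaining $u\in \p(\V n)$ set $\Gx(u):=0$. By the first step the first two clauses apply to disjoint sets, so $\Gx$ is unambiguously defined, and condition~(\ref{v:GOnGraph_Dva}) is immediate from $|\Gx(u)|\leq \|f\|$. Condition~(\ref{v:GOnGraph_JednaC}) is trivial because any end of a long almost vertical segment has its $u$-coordinate in $\p(V^n_x)$, which is disjoint from $\p(V^n_y)$, so $\Gx(u)=0$. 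For (\ref{v:GOnGraph_JednaB}), if $(u,v)$ is an end of a long almost horizontal segment then $u\in \p(V^n_y)$ and $\Gx(u)=f(u,v^\sharp)$; the vertical pair $(u,v),(u,v^\sharp)$ cannot form a long almost vertical segment (else $(u,v^\sharp)\in V^n_x\cap V^n_y$), so $|v-v^\sharp|<\delta$ and uniform continuity of $f$ yields $|\Gx(u)-f(u,v)|\leq \vare\leq 2\vare$.

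The delicate requirement is (\ref{v:GOnGraph_JednaA}). For a short segment $[(u_1,v_1);(u_2,v_2)]$ in $\V n$ which is almost vertical or almost horizontal, I distinguish cases according to the clauses governing $\Gx(u_1)$ and $\Gx(u_2)$. If both $u_i$ fall outside $\p(V^n_y)$ then $\Gx(u_1)=\Gx(u_2)=0$. If both $u_i\in \p(V^n_y)$, I apply the triangle inequality along the chain $(u_1,v_1^\sharp)\to(u_1,v_1)\to(u_2,v_2)\to(u_2,v_2^\sharp)$: each link has length less than $\delta$ (the end links by the bound $|v_i-v_i^\sharp|<\delta$ already used in (\ref{v:GOnGraph_JednaB}), the middle link by shortness of the segment), so three applications of the $\vare$-modulus of $f$ give $|\Gx(u_1)-\Gx(u_2)|\leq 3\vare$. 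The remaining mixed case, in which $u_1\in \p(V^n_y)$ while $u_2\in \p(V^n_x)$ (or symmetrically), is precisely the configuration excluded in the first step for $n\geq n_1$.

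The main obstacle is the compactness-exclusion step. The care required is twofold: first, one must run the limit so that three \emph{distinct} points of $K$ are produced, handled by a small case split on which pairs of limit points coincide; second, one must verify in every subcase that the two connecting segments among the three extracted points are \emph{honest} vertical and horizontal segments in the sense of the definition of an array. All the geometric content of the no-array-of-length-two hypothesis is used in this single step, and the rest is a direct verification via uniform continuity of $f$.
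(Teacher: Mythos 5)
Your compactness step, restricted as you state it to almost vertical or almost horizontal segments, is in fact correct: if for arbitrarily large $n$ there were an almost vertical (say) segment $[(u_1,v_1);(u_2,v_2)]$ with $u_1\in p(V^n_y)$ and $u_2\in p(V^n_x)$, then along a subsequence the points $(u_1,v_1^\sharp)$, its long horizontal partner, $(u_2,v_2^\sharp)$ and its long vertical partner all converge in $K$; since $|u_1-u_2|\to 0$, the limits of $(u_1,v_1^\sharp)$ and $(u_2,v_2^\sharp)$ lie on one vertical line, the two long partners give a nondegenerate horizontal and a nondegenerate vertical segment on that line, and a short case split does yield an array of length two. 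So $V^n_x\cap V^n_y=\emptyset$ and $p(V^n_x)\cap p(V^n_y)=\emptyset$ for large $n$, and conditions (\ref{v:GOnGraph_JednaB}), (\ref{v:GOnGraph_JednaC}) and (\ref{v:GOnGraph_Dva}) go through.

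The gap is in (\ref{v:GOnGraph_JednaA}), and it is not repairable with your three--way definition of $\Gx$. You treat a short segment $[(u_1,v_1);(u_2,v_2)]$ by three cases, but the genuinely problematic mixed case is not $u_1\in p(V^n_y)$, $u_2\in p(V^n_x)$ (which your exclusion handles); it is $u_1\in p(V^n_y)$ and $u_2$ in the \emph{third} clause, i.e.\ $u_2\notin p(V^n_y)\cup p(V^n_x)$. There you get $\Gx(u_1)=f(u_1,v_1^\sharp)$ and $\Gx(u_2)=0$, and nothing bounds $|f(u_1,v_1^\sharp)|$ by $3\varepsilon$. Your compactness argument does not exclude this case, and in fact it cannot: no array of length two is created when a point with a long horizontal partner is approached by points that have no long partners at all. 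A concrete obstruction is $K=\bigl([0,1]\times\{0\}\bigr)\cup\{(1/k,1/k):k\geq 2\}$, which contains no array of length two. For large $n$ the point of $\V n$ near $(0,0)$ lies in $V^n_y$, while each point of $\V n$ near $(1/k,1/k)$ (for $1/\delta<k\ll 2^n$) has no long almost vertical or almost horizontal neighbour, hence falls under your third clause; the two points form a short segment, so (\ref{v:GOnGraph_JednaA}) forces $|f(0,0)|\lesssim 3\varepsilon$, which is false for generic $f$. This is exactly the difficulty the paper flags when it remarks that $K_x$ and $K_y$ need not be closed and that one ``cannot find a suitable set to begin the definitions''; the paper's remedy is not a separation statement but the graph--distance construction of the auxiliary function $\pom$ via the graphs on $V_+^n$ and $V_-^n$, which interpolates $\pom$ continuously across the region between $V^n_y$ and $V^n_x$ instead of jumping to $0$ the moment a point loses its long horizontal partner. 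The quantitative version of that interpolation is precisely Lemma~\ref{l:AlmostArray}, which you do not use and which your construction has no substitute for.

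A secondary point: you silently restrict (\ref{v:GOnGraph_JednaA}) to short segments that are additionally almost vertical or almost horizontal, whereas the statement is for all short segments. In this particular paper the later uses of (\ref{v:GOnGraph_JednaA}) do happen to involve only aligned short segments, so this restriction might be survivable with bookkeeping; the fatal issue is the one above.
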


The function $\Hy$ is constructed using $\Gx$ from Theorem~\ref{v:GOnGraph}:

\begin{theo}\label{vv:GHOnGraph} 
There exists an $n_2$ such that for all $n\geq n_2$ functions
$\fun \Gx {\p(\V n)} {\mathbb R}$ and $\fun \Hy {\q(\V n)} {\mathbb R}$ satisfying the following requirements exist.
\begin{enumerate}
\item\label{vv:GHOnGraph_Jedna} 
   \begin{enumerate}
     \item\label{vv:GHOnGraph_JednaA} $|f(u,v)-\Gx(u)-\Hy(v)|\leq 4\varepsilon$ for each $(u,v)\in \V n$
  	 \item\label{vv:GHOnGraph_JednaB}  
  	  $|\Gx(u_1)-\Gx(u_2)|\leq 3\varepsilon$ 
  	  for each segment $[(u_1,v_1);(u_2,v_2)]$ which is almost vertical in $\V n$ 
  	 \item\label{vv:GHOnGraph_JednaC}  $|\Hy(v_1)-\Hy(v_2)|\leq 12\varepsilon$ 
  	  for each segment $[(u_1,v_1);(u_2,v_2)]$ which is almost horizontal in $\V n$ 
    \end{enumerate}
  \item\label{vv:GHOnGraph_Dva} $||\Gx||\leq ||f||$, $||\Hy||\leq 2||f||.$ 
\end{enumerate}
\end{theo}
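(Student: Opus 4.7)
The plan is to keep the function $\Gx$ supplied by Theorem~\ref{v:GOnGraph} (so take $n_2 = n_1$) and define $\Hy$ on $\q(\V n)$ as a pointwise residual. For each $v \in \q(\V n)$, I would pick any point $(u^v, v) \in \V n$ in the fiber over $v$ and set
\[
\Hy(v) \;=\; f(u^v, v) - \Gx(u^v).
\]
The bound $\|\Gx\| \leq \|f\|$ is inherited from Theorem~\ref{v:GOnGraph}, and then $\|\Hy\| \leq \|f\| + \|\Gx\| \leq 2\|f\|$. Property~(\ref{vv:GHOnGraph_JednaB}) for almost vertical segments follows directly from Theorem~\ref{v:GOnGraph}: short segments give $|\Gx(u_1) - \Gx(u_2)| \leq 3\vare$ by~(\ref{v:GOnGraph_JednaA}), while for long almost vertical segments one has $|\Gx(u_i)| \leq \vare$ at each endpoint via~(\ref{v:GOnGraph_JednaC}), so the difference is at most $2\vare$.

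The approximate decomposition~(\ref{vv:GHOnGraph_JednaA}) is the key estimate. For each $(u,v) \in \V n$, I would write
\[
f(u,v) - \Gx(u) - \Hy(v) \;=\; [f(u,v) - f(u^v, v)] - [\Gx(u) - \Gx(u^v)],
\]
and note that the connecting segment $[(u,v); (u^v, v)]$ is exactly horizontal, hence almost horizontal in $\V n$. If it is short, the first bracket is bounded by $\vare$ by uniform continuity of $f$ and the second by $3\vare$ via~(\ref{v:GOnGraph_JednaA}), for a total of $4\vare$. If it is long, then both $(u,v)$ and $(u^v, v)$ are endpoints of a long almost horizontal segment in $\V n$, so~(\ref{v:GOnGraph_JednaB}) bounds each of $|\Gx(u) - f(u,v)|$ and $|\Gx(u^v) - f(u^v, v)|$ by $2\vare$, again yielding $4\vare$.

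For~(\ref{vv:GHOnGraph_JednaC}), given an almost horizontal segment $[(u_1, v_1); (u_2, v_2)]$, I would insert the quantities $f(u_i, v_i) - \Gx(u_i)$ and use the triangle inequality:
\[
|\Hy(v_1) - \Hy(v_2)| \;\leq\; \sum_{i=1}^{2} \bigl|\Hy(v_i) - (f(u_i, v_i) - \Gx(u_i))\bigr| + \bigl| [f(u_1, v_1) - \Gx(u_1)] - [f(u_2, v_2) - \Gx(u_2)] \bigr|.
\]
The two outer terms are each at most $4\vare$ by~(\ref{vv:GHOnGraph_JednaA}) applied to $(u_i, v_i)$. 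The middle term is controlled by exactly the same short/long dichotomy already used above, now applied to the given almost horizontal segment, giving another $4\vare$. Summing yields $12\vare$.

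The substantive content of the proof is that the definition $\Hy(v) = f(u^v, v) - \Gx(u^v)$ succeeds for \emph{any} representative $u^v$: the bound~(\ref{v:GOnGraph_JednaB}) built into Theorem~\ref{v:GOnGraph} is precisely what is needed to control every long horizontal separation that can arise, without the need for a clever selection rule. The remaining work is a short sequence of triangle inequalities; the main difficulty was already absorbed into Theorem~\ref{v:GOnGraph}, and this step is essentially a corollary of it.
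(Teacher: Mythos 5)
Your proposal is correct and follows the paper's own approach exactly: take $n_2 = n_1$, keep $\Gx$ from Theorem~\ref{v:GOnGraph}, and set $\Hy(v) = f(u^v,v) - \Gx(u^v)$ for an arbitrary representative $(u^v,v)$. The short/long case analysis you use to verify (a), (b), (c) is precisely what the paper summarizes as ``similar to the proof of Theorem~\ref{v:GOnGraph},'' and your filled-in estimates are all accurate.
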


In order to construct the function $\Gx$ from Theorem~\ref{v:GOnGraph} we need the following lemma.
Its proof is based on the fact that $K$ is compact and does not contain an array of length 2. It will
be used with $l=[||f||/\vare]$.

\begin{lema}\label{l:AlmostArray} For each $l\in \mathbb N$ there exists an $n_0$ such that for all $n\geq n_0$ the following holds:
if $\{w_1,\ldots,w_k\}$ is an almost array in $\V n$ and $w_1$ is the end of a long almost vertical segment in $\V n$
and $w_k$ is the end of a long almost horizontal segment in $\V n$, then the length of the almost array
is at least $l$, i.e. $k-1\geq l$.
\end{lema}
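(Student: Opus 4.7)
The plan is a proof by contradiction exploiting the compactness of $K$ together with the hypothesis that $K$ contains no array of length two. Assume the lemma fails for some $l$: then for arbitrarily large $n$ there exist almost arrays $w_1^n,\ldots,w_{k_n}^n$ in $\V n$ with $k_n-1<l$, together with companion points $w_0^n,w_{k_n+1}^n\in \V n$ such that $[w_0^n,w_1^n]$ is long almost vertical and $[w_{k_n}^n,w_{k_n+1}^n]$ is long almost horizontal in $\V n$.

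I would first reduce to a clean limit configuration. Since $k_n\le l+1$, pass to a subsequence on which $k_n$ takes a constant value $k$. Pass to a further subsequence on which the ``type'' (almost vertical or almost horizontal, chosen arbitrarily when both hold) of each intermediate segment $[w_i^n,w_{i+1}^n]$, $1\le i\le k-1$, is fixed in $n$. Finally, since each $w_i^n\in \V n\subset K$ and $K$ is compact, a diagonal subsequence gives convergent sequences $w_i^n\to w_i^*\in K$ for every $i\in\{0,1,\ldots,k+1\}$.

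Passing to the limit in the inequalities defining ``almost vertical'' and ``almost horizontal'' (each bounded by $2/2^n\to 0$) produces exact relations: $\p(w_0^*)=\p(w_1^*)$, $\q(w_k^*)=\q(w_{k+1}^*)$, and for each intermediate $i$, either $\p(w_i^*)=\p(w_{i+1}^*)$ or $\q(w_i^*)=\q(w_{i+1}^*)$, according to the fixed type. The long conditions $|w_0^n-w_1^n|\ge\delta$ and $|w_k^n-w_{k+1}^n|\ge\delta$ survive, so both endpoint segments are nondegenerate. In particular $\p(w_k^*)\ne\p(w_{k+1}^*)$ while $\p(w_0^*)=\p(w_1^*)$, so there is a smallest index $j$ with $\p(w_j^*)\ne\p(w_{j+1}^*)$; forced by the types, this segment must be of type $H$, giving $\q(w_j^*)=\q(w_{j+1}^*)$ and making $[w_j^*,w_{j+1}^*]$ a nondegenerate horizontal segment in $K$, while $\p(w_0^*)=\cdots=\p(w_j^*)$.

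From this configuration I extract an array of length two in $K$ by a case split. If $\q(w_0^*)\ne\q(w_j^*)$, the triple $(w_0^*,w_j^*,w_{j+1}^*)$ consists of three distinct points in $K$ forming a nondegenerate vertical segment followed by a nondegenerate horizontal segment. If instead $w_0^*=w_j^*$, then $(w_1^*,w_0^*,w_{j+1}^*)$ plays the same role: $[w_1^*,w_0^*]$ is nondegenerate vertical of length at least $\delta$, $[w_0^*,w_{j+1}^*]=[w_j^*,w_{j+1}^*]$ is nondegenerate horizontal, and the three points are distinct since $\p(w_1^*)=\p(w_0^*)\ne\p(w_{j+1}^*)$. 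Either way, one obtains an array of length two in $K$, contradicting the hypothesis. The main obstacle is precisely that nothing prevents intermediate points from coinciding in the limit, so the limit chain is not itself an array in general; focusing on the first index at which the $\p$-coordinate genuinely changes — guaranteed by the long horizontal segment at the end — and absorbing the one degenerate possibility $w_0^*=w_j^*$ via the second case is what makes the argument go through.
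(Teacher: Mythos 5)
Your proof is correct and follows essentially the same route as the paper's: contradiction, a diagonal compactness argument to obtain limit points $w_0^*,\ldots,w_{k+1}^*\in K$ forming a chain whose consecutive pairs are degenerate or axis-parallel with nondegenerate vertical and horizontal segments at the two ends, and then extraction of an array of length two. The one place you add genuine value is the final extraction step, where the paper simply asserts ``Therefore, the set $\{w_0,\ldots,w_{l_0+1}\}$ contains an array of length two'': you make this explicit by locating the first index $j$ where the $p$-coordinate changes, observing that $[w_j^*;w_{j+1}^*]$ is then forced to be nondegenerate horizontal, and handling the degenerate possibility $w_0^*=w_j^*$ by replacing $w_j^*$ with $w_1^*$ — a necessary care that the paper's terse conclusion glosses over.
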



\subsection*{Step 3}

The functions $\gx$, $\hy$ are obtained by linear
extensions of the functions $\Gx$, $\Hy.$

\begin{theo}\label{v:GHOnR} There exists $n_3$ such that for all $n\geq n_3$ functions
$\gx,\hy\in C(\mathbb R)$ satisfying the following requirements exist.
such that
\begin{enumerate}
	\item $|f(x,y)-\gx(x)-\hy(y)|\leq 20\varepsilon$ for all points $(x,y)\in K$
	\item $||\gx||\leq ||f||$, $||\hy||\leq 2||f||$.
\end{enumerate}
\end{theo}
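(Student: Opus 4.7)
The plan is to choose $n_3 \geq n_2$ with $1/2^{n_3} < \delta$, invoke Theorem~\ref{vv:GHOnGraph} to obtain $\Gx$ and $\Hy$, and then extend them to continuous functions $g, h$ on $\mathbb{R}$ by a construction that respects the dyadic lattice underlying $V^n$. Specifically, for each dyadic column $I_i := [i/2^n, (i+1)/2^n)$ that meets $p(V^n)$ (a filled column), I pick a representative $u^{(i)} \in p(V^n) \cap I_i$ and set $g \equiv \Gx(u^{(i)})$ on most of $I_i$; I insert thin linear transition zones of width $1/2^{n+2}$ centered on the boundaries between adjacent filled columns to enforce continuity, interpolate linearly across any larger empty gaps (no $K$-points live there), and extend by constants outside the range of all filled columns. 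The function $h$ is constructed analogously from $\Hy$ using the dyadic rows. Since every value of $g$ lies in the convex hull of $\{\Gx(u^{(i)})\}$, the norm bounds $\|g\| \leq \|\Gx\| \leq \|f\|$ and $\|h\| \leq \|\Hy\| \leq 2\|f\|$ follow immediately from Theorem~\ref{vv:GHOnGraph}(2).

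For the approximation bound, given $(x,y) \in K$ let $(u,v) \in V^n$ be the representative of the lattice square containing $(x,y)$, so $|x - u|, |y - v| < 1/2^n < \delta$ and hence $|f(x,y) - f(u,v)| < \varepsilon$ by uniform continuity. The triangle inequality
\[
|f(x,y) - g(x) - h(y)| \leq |f(x,y) - f(u,v)| + |f(u,v) - \Gx(u) - \Hy(v)| + |g(x) - \Gx(u)| + |h(y) - \Hy(v)|,
\]
combined with part~(1a) of Theorem~\ref{vv:GHOnGraph}, reduces the task to showing $|g(x) - \Gx(u)| \leq 3\varepsilon$ and $|h(y) - \Hy(v)| \leq 12\varepsilon$; the total is then exactly $\varepsilon + 4\varepsilon + 3\varepsilon + 12\varepsilon = 20\varepsilon$.

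The main obstacle is the bound $|g(x) - \Gx(u)| \leq 3\varepsilon$, and this is where the column-respecting design pays off. Because $u$ and $x$ lie in the same dyadic column $I_i$, we have $|u - u^{(i)}| < 1/2^n < 2/2^n$, so the segment from $(u,v)$ to the chosen point $(u^{(i)}, v^{(i)}) \in V^n$ is almost vertical and part~(1b) of Theorem~\ref{vv:GHOnGraph} yields $|\Gx(u) - \Gx(u^{(i)})| \leq 3\varepsilon$. If $x$ happens to fall in a transition zone bordering a neighboring filled column $I_{i\pm 1}$, the same reasoning with $|u - u^{(i\pm 1)}| < 2/2^n$ gives $|\Gx(u) - \Gx(u^{(i\pm 1)})| \leq 3\varepsilon$. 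Since $g(x)$ is by construction either $\Gx(u^{(i)})$ or a convex combination of $\Gx(u^{(i)})$ and $\Gx(u^{(i\pm 1)})$, it must lie in the closed ball of radius $3\varepsilon$ about $\Gx(u)$. The analogous argument with the constant $12\varepsilon$ from part~(1c) handles $|h(y) - \Hy(v)|$. A naive piecewise-linear extension between consecutive elements of $p(V^n)$ would not suffice here: such an extension can straddle an empty dyadic column and then one of the interpolation endpoints need not lie within $2/2^n$ of $u$, destroying the $3\varepsilon$ control.
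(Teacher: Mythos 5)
Your proposal is correct and follows the same overall strategy as the paper (choose $n_3\geq n_2$ with $1/2^{n_3}\leq\delta$, invoke Theorem~\ref{vv:GHOnGraph}, extend $\Gx,\Hy$ to $\mathbb R$, and then combine uniform continuity of $f$ with parts (1a), (1b), (1c) of Theorem~\ref{vv:GHOnGraph} via the triangle inequality to get $\varepsilon+4\varepsilon+3\varepsilon+12\varepsilon=20\varepsilon$). Where you differ is in the extension mechanism. The paper sorts $\p(\V n)$ as $x_1<\dots<x_k$, interpolates linearly on $[x_i,x_{i+1}]$ when $|x_i-x_{i+1}|<1/2^{n-1}$, and when $|x_i-x_{i+1}|\geq 1/2^{n-1}$ it locates an empty dyadic column $I$ strictly between them (using $\p^{-1}(I)\cap K=\emptyset$) and extends $g$ as the constants $g(x_i)$, $g(x_{i+1})$ on the two sides of $I$. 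Your construction instead makes $g$ essentially locally constant on each filled dyadic column, with thin linear transition zones straddling common boundaries of adjacent filled columns and linear interpolation across runs of empty columns. Both designs achieve the same effect: for $(x,y)\in K$ with representative $(u,v)$, the value $g(x)$ is forced to be a convex combination of values $\Gx(u')$ where $(u,v)$ and $(u',v')$ span an almost vertical segment, giving the $3\varepsilon$ bound from (1b), and analogously $12\varepsilon$ from (1c). Your closing remark that a blind piecewise-linear interpolation between consecutive $\p(\V n)$-points, with no special treatment of empty columns, would break the $3\varepsilon$ control is accurate; the paper addresses exactly this issue with the constant extension across the empty column, and your column-respecting construction addresses it in a slightly different but equally valid way. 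If anything, fixing a single representative $u^{(i)}$ per column and making $g$ constant there makes the $3\varepsilon$ estimate slightly more transparent, since one does not have to worry about several $\V n$-points projecting into the same column with different $x$-coordinates.
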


\section{Proof of the main statement}

\begin{proof}[Proof of Theorem~\ref{v:Basic}]
Assuming that
the above three steps have been accomplished, the  statement follows immediately from
Theorem~\ref{v:GHOnR} and Theorem~\ref{v:GHExact} below. 
\end{proof}

To make the proof clearer though, we explicitly describe our construction.

According to Lemma~\ref{l:AlmostArray}
there exists an $n_0$ such that for all $n\geq n_0$ every almost array in $\V n$
starting in a long almost vertical segment and ending in a long almost horizontal segment has length
at least $[||f||/\vare]$. We define a constant $N=\max\{n_0, -[\log_2 \delta]\}$ and
let the lower bounds  $n_1, n_2$ and $n_3$ from Theorems~\ref{v:GOnGraph}, \ref{vv:GHOnGraph} and \ref{v:GHOnR} be all equal to $N$. We take an arbitrary $n\geq N$.

We construct $\V n$ and the corresponding function $\fun \Gx {\p(\V n)} {\mathbb R}$ from Theorem~\ref{v:GOnGraph}
which approximates $\gx$.
Using $\Gx$, we define the function $\fun \Hy {\q(\V n)} {\mathbb R}$ from Theorem~\ref{vv:GHOnGraph} 
thus obtaining the approximate of $\hy$.  The functions $\Gx$ and $\Hy$
are extended as piecewise linear functions on $\mathbb R$ thus obtaining 
the functions $\gx$ and $\hy$ from
Theorem~\ref{v:GHOnR}. 
Applying Theorem~\ref{v:GHExact} below we obtain the exact decomposition.

\begin{theo}[implication (b) 
$\Rightarrow$ (c) of Theorem 4.13 in \cite{rudin}]\label{v:GHExact} Let $X\subseteq \mathbb R^2$ be an arbitrary compact subset of the plane.
Assume that there exists a positive integer $k\in \mathbb N$ such that for each function $f\in C(X)$
and each positive real $\varepsilon>0$ there exist functions $\gxe,\hye\in C(\mathbb R)$
such that
\begin{enumerate}
\item $|f(x,y)-\gxe(x)-\hye(y)|\leq \varepsilon$
for all points $(x,y)\in X$ 
\item $||\gxe||\leq k||f||$, $||\hye||\leq k||f||$.
\end{enumerate}
Then there exist functions $\gx,\hy\in C(\mathbb R)$ such that
$f(x,y)=\gx(x)+ \hy(y)$
for all points $(x,y)\in X$.
\end{theo}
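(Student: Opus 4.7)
\medskip

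\noindent\textbf{Proof plan for Theorem~\ref{v:GHExact}.} The plan is a standard telescoping/iteration argument: apply the approximate decomposition hypothesis over and over to the successive remainders, pick the approximation parameters so that the remainders shrink geometrically, and sum the pieces into continuous functions $g,h$ on $\mathbb R$.

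\medskip

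\noindent First I would fix a contraction ratio $\lambda\in(0,1)$ (any fixed choice works; $\lambda=1/2$ is fine) and construct three sequences $\{f_i\}_{i\geq 0}\subseteq C(X)$, $\{g_i'\}_{i\geq 0}$, $\{h_i'\}_{i\geq 0}\subseteq C(\mathbb R)$ inductively. Set $f_0=f$. Given $f_i\in C(X)$, apply the hypothesis to $f_i$ with approximation constant $\varepsilon=\lambda\|f_i\|$ to obtain $g_i',h_i'\in C(\mathbb R)$ satisfying
\begin{equation*}
|f_i(x,y)-g_i'(x)-h_i'(y)|\leq \lambda\|f_i\| \quad \text{on } X,
\qquad \|g_i'\|\leq k\|f_i\|,\ \|h_i'\|\leq k\|f_i\|.
\end{equation*}
Then define $f_{i+1}\in C(X)$ by $f_{i+1}(x,y)=f_i(x,y)-g_i'(x)-h_i'(y)$ for $(x,y)\in X$. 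The first inequality above gives $\|f_{i+1}\|\leq \lambda\|f_i\|$, so by induction $\|f_i\|\leq \lambda^i\|f\|$ and therefore $\|g_i'\|,\|h_i'\|\leq k\lambda^i\|f\|$.

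\medskip

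\noindent Next, because $\sum_{i\geq 0} k\lambda^i\|f\|<\infty$, the Weierstrass $M$-test shows that the series
\begin{equation*}
g(x):=\sum_{i=0}^{\infty} g_i'(x),\qquad h(y):=\sum_{i=0}^{\infty} h_i'(y)
\end{equation*}
converge uniformly on $\mathbb R$; hence $g,h\in C(\mathbb R)$. Unwinding the definition of $f_{i+1}$ by a telescoping sum gives, for every $(x,y)\in X$ and every $N\geq 0$,
\begin{equation*}
f(x,y)-\sum_{i=0}^{N}\bigl(g_i'(x)+h_i'(y)\bigr)=f_{N+1}(x,y),
\end{equation*}
and the right side has supremum norm at most $\lambda^{N+1}\|f\|\to 0$. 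Letting $N\to\infty$ yields $f(x,y)=g(x)+h(y)$ for all $(x,y)\in X$, which is the desired decomposition.

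\medskip

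\noindent There is no real obstacle here, the proof is essentially bookkeeping; the only point that requires any care is making sure the norm bounds $\|g_i'\|,\|h_i'\|\leq k\|f_i\|$ on $\mathbb R$ (not just on $X$) are used to apply the $M$-test, so that $g,h$ are continuous on all of $\mathbb R$ and not merely on the projections $p(X),q(X)$. This is exactly what the hypothesis provides, and it is the reason the uniform norm control $\|g'\|\leq k\|f\|$ is included in the statement.
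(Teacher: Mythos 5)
The paper does not give its own proof of this statement; it is cited as implication (b)~$\Rightarrow$~(c) of Theorem~4.13 in Rudin's \emph{Functional Analysis}, which is the general Banach-space lemma: if $T\colon E\to F$ is a bounded linear operator whose image of the closed unit ball is $\delta$-dense in the unit ball of $F$, then that image actually \emph{contains} the ball of radius $\delta$. Your iteration is exactly Rudin's proof specialised to the operator $T\colon C(\mathbb R)\oplus C(\mathbb R)\to C(X)$, $T(g,h)=(g\circ p + h\circ q)|_X$, with the abstract layer stripped away; the telescoping, the geometric decay, and the Weierstrass $M$-test are the same ingredients, so the argument is correct and follows the cited route rather than departing from it. One small caveat worth fixing: the hypothesis only supplies $\gxe,\hye$ for \emph{positive} $\varepsilon$, so you cannot literally take $\varepsilon=\lambda\|f_i\|$ when $\|f_i\|=0$. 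Either treat $f_i\equiv 0$ as a stopping case (set $g_j'=h_j'=0$ for all $j\ge i$), or fix $\varepsilon_i=\lambda^{i+1}\|f\|$ in advance after disposing of the trivial case $f\equiv 0$; with $\varepsilon_i$ fixed you still get $\|f_{i+1}\|\le\lambda^{i+1}\|f\|$ and $\|g_i'\|,\|h_i'\|\le k\lambda^i\|f\|$ by the same induction, and the rest of your proof goes through unchanged.
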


\section{Proofs of the statements}

Let us give the proofs of the statements in the order in which we use them to prove Theorem~\ref{v:Basic}.

\begin{proof}[Proof of Lemma~\ref{l:AlmostArray}] 
First, let us note the following. Let
$\{[w_1^{n}; w_2^{n} ]\}_{n=1}^\infty$ be a sequence 
where each
 $[w_1^{n}; w_2^{n}]$ is an almost vertical or an almost horizontal segment in  $\V n.$
Then, since $K$ is compact, there is a subsequence $\{[w_1^{m_n}; w_2^{m_n} ]\}_{m_n}$ 
such that both $w_1^{m_n} \to w_1 \in K$
and $w_2^{m_n} \to w_2 \in K$ as $n\to \infty.$
Evidently, either $w_1=w_2$ or $[w_1;w_2]$ is a segment
parallel to one of the coordinate axes.
Moreover, if $|\p(w^{m_n}_1)-\p(w^{m_n}_2)|\geq \delta$
for each $n$ then  $|\p(w_1)-\p(w_2)|\geq \delta$ and $[w_1;w_2]$ is a segment 
parallel to the $x$ axis.
Similarly for the projection $\q$.

Assuming that the statement is not true, we will show
that $K$ contains an array of length two.
So, assume that for some $l_0$  there exists an increasing sequence $\{m_n\}_{n=1}^\infty$
of integers 
such that each set
$\V {m_n}$ contains an almost array $\{w^{m_n}_1, w^{m_n}_2,\ldots ,w^{m_n}_{\kmn}\}$
as in the statement, but its length $\kmn-1$ is smaller than $l_0$, so $\kmn\leq l_0$. 
This implies that infinitely many of the numbers $\kmn$ are the same.
Without loss of generality we may assume that $\kmn= l_0$ for all $n$.
For each $i$, the point $w^i_1$ is the end of a long almost vertical segment in $\V n$ denoted by $[w^i_{0};w^i_1]$
and the point $w^i_{l_0}$ is the end of a long almost vertical segment in $\V n$, denoted by $[w^i_{l_0};w^i_{l_0+1}]$.

It follows, that there exist limit points 
$w_{0},w_1,\ldots,w_{l_0+1} \in K$ such that either $w_i=w_{i+1}$ or
$[w_i;w_{i+1}]$ is a segment parallel to one of the coordinate axes for each $i$. In particular,
$[w_{0};w_1]$ is a vertical segment and $[w_{l_0};w_{l_0+1}]$ is a horizontal segment.
Therefore, the set $\{w_{0},w_1,\ldots,w_{l_0+1}\}\subseteq K$ contains an array of length two.
\end{proof}

\begin{proof}[Proof of Theorem~\ref{v:GOnGraph}]
Denote $F=[{||f||}/{\varepsilon}]$.
%
Let $n_1$ be equal to $n_0=n(l)$ from Lemma~\ref{l:AlmostArray} which corresponds
to $l=F$. Take an arbitrary $n\geq n_1$.

\renewcommand{\theenumi}{\roman{enumi}}
First we define a function 
 $\fun \pom {\V n} \RR$ such that
\begin{enumerate}
  \item\label{v:pomS_Jedna}
  \begin{enumerate}
   \item\label{v:pomS_JednaA} $|\pom(w_1)-\pom(w_2)|\leq \varepsilon$ for each
    short segment $[w_1;w_2]$ 
     \item\label{v:pomS_JednaB}  $|f(w)-\pom(w)|\leq \varepsilon$ for each $w$ which is the end of a long almost   horizontal segment
     \item\label{v:pomS_JednaC} $\pom(w)= 0$ for each $w$ which is the end of a long almost vertical segment  
  \end{enumerate}
  \item\label{v:pomS_Dva} $||\pom||\leq ||f||.$ 
\end{enumerate}
\renewcommand{\theenumi}{\alph{enumi}}

Second, we define $\Gx$ using $\pom$; we shall have roughly $\Gx(u)= \pom(u,v) \pm \vare$ for all $(u,v)\in \V n$. 
To construct $\pom$, we define two abstract graphs with vertices  from $\V n$.
They are not embedded in the plane.

Assume, that for each $i=-F, -F+1,\ldots, F$ there exists a point $w\in \V n$
such that $[f(w)/\vare]=i$. If this is not the case then we add a new point $z \in \Rdva$ to $\V n$
for each $i$ for which no such point
exists. Formally we consider
it as the end of a long almost horizontal segment, and let $f(z)=i\vare$.
The point is added so that its distance from each point from $\V n$ is greater than $\delta$.

Let $\Vplus n$ be the set of all points $w\in \V n$ with $f(w)\geq 0$ with one  vertex $\wplus$ added:
$$\Vplus n = \{w\in \V n\mid  f(w)\geq 0\} \cup \{\wplus\}.$$
We define $f(\wplus)=(F+1)\vare$.
Let $\Vminus n$ be the set of all points $w\in \V n$ with $f(w)< 0$ with one vertex $\wminus$ added:
$$\Vminus n = \{w\in \V n\mid  f(w)< 0\} \cup \{\wminus\}$$
and let $f(\wminus)=(-F-1)\vare$.

The edge set $E(\Vplus n)$ consists of edges
\begin{itemize}
\item $w_1w_2$ where $[w_1;w_2]$ is a short segment
\item $w_1w_2$ where both $w_1$ and $w_2$ are the ends of long almost horizontal segments in $\V n$ and
$[f(w_1)/\vare]-[f(w_2)/\vare]=1$
%
\item $\wplus w$ where $w$ is the end of a long almost horizontal segment in $\V n$ and
$[f(w)/\vare]=F$.
\end{itemize}

Evidently 
\begin{equation}\label{e:EachEdge} |f(w_1) - f(w_2)|\leq \varepsilon
\end{equation} for each edge $w_1w_2 \in E(\Vplus n)$.
The edges $E(\Vminus n)$ are defined analogously.

%
%
Let $\fun d {\Vplus n} {\{0,1,2,\ldots\}}$ be the
function, assigning to each vertex which is connected to $\wplus$ by a path in $E(\Vplus n)$
its distance from $\wplus$, and assigning to each other vertex the value 0.
For each vertex $w\in \Vplus n$ let
\begin{equation}\label{e:ThirdPartDefG} \pom(w) = \max\left\{
(F - d(w) + 1)\vare, 0 \right\}.\end{equation}
Analogously we define the function $\pom$ on $\Vminus n$.

Let us show that the function $\pom$  satisfies~(\ref{v:pomS_Jedna}) and (\ref{v:pomS_Dva}).

(\ref{v:pomS_JednaA}) Let $[w_1;w_2]$ be a short segment in $\V n$.
If both $w_1$ and $w_2$ are in $\Vplus n$, or both $w_1$ and $w_2$ are in $\Vminus n$ then
$|\pom(w_1)-\pom(w_2)|\leq \varepsilon$ follows directly from the definition of $\pom.$
So, let $w_1\in \Vplus n$ and $w_2\in \Vminus n$. Using
(\ref{e:EachEdge}), (\ref{e:ThirdPartDefG}), by induction
on the distance from the vertex $\wplus$  and analogously,
by induction on the distance from $\wminus$  we can show that
\begin{equation}\label{e:WhereIsG}
\begin{array}{ll}
\phantom{f(),} 0\leq  \pom(w)  \leq f(w) & \text{for all }\ w\in \Vplus n \\
f(w)\leq  \pom(w) \leq 0 &\text{for all }\  w\in \Vminus n
\end{array}
\end{equation}

Since the segment $[w_1;w_2]$ is short we have $|f(w_1)-f(w_2)|<\varepsilon$.
So $|\pom(w_1)-\pom(w_2)| = \pom(w_1)-\pom(w_2) \leq f(w_1)-f(w_2)< \varepsilon$.

(\ref{v:pomS_JednaB}) Let $w$ be the end of a long almost horizontal segment in $\V n$. 
Let  $w\in \Vplus n$ for instance. Denote $i=[f(w)/\vare]$.
%
By definition there is a path 
$\wplus \ww {F}\ldots \ww {i+2} \ww {i+1} w$ whose edges are in $E(\Vplus n)$.
Each of its vertices 
$\ww j$ is the end of a long almost horizontal segment in $\V n$ and $[f(\ww j)/\vare]=j$.
So $d(w)\leq F - i+1.$ 
Hence $\pom(w) =(F-d(w)+1) \varepsilon\geq i\varepsilon > f(w) - \vare.$ 
On the other hand, Equation~(\ref{e:WhereIsG}) implies that $\pom(w)\leq f(w)$
and (\ref{v:pomS_JednaB}) follows.

(\ref{v:pomS_JednaC})
Let $w$ be the end of a long almost vertical segment in $\V n$.
Let $w\in \Vplus n$ for instance.
If $w$ is not connected to $\wplus$ by a path then, by definition, $\pom(w)=0.$
 
So, let
$\wplus \ldots w$ be a path such that $d(w)$ is
equal to its length. 
%
Let $w'\ldots w$ be its longest subpath containing $w$, such that each of it edges
corresponds to a short
almost vertical or
almost horizontal segment in $\V n$. Then
$w'$ is the end of a long horizontal segment in $\V n$.
Thus the vertices of the subpath form an almost array
which satisfies the requirements of Lemma~\ref{l:AlmostArray}.
Hence, if we denote its length by $p$, we have $F\leq p\leq d(w)-1$.
So $\pom (w)=(F- d(w)+1)\varepsilon < \varepsilon$. On the other hand, by the definition~(\ref{e:ThirdPartDefG}) of $\pom$,
we have $\pom(w)\geq 0.$ Since the values of $\pom$ are integer multiples of $\vare$, it follows that $\pom(w)=0$.

Point~(\ref{v:pomS_Dva}) follows directly from (\ref{e:WhereIsG}).

\medskip The function $\Gx$ is constructed in the following way.
For each point $u\in \p(\V n)$ fix an arbitrary point $(u,v)\in
\V n$, and define
$\Gx(u)=\pom(u,v).$ 
Let us show that $\Gx$ satisfies
(\ref{v:GOnGraph_Jedna}), (\ref{v:GOnGraph_Dva}) from Theorem~\ref{v:GOnGraph}.

(\ref{v:GOnGraph_JednaA}) Let $[(u_1,v_1);(u_2,v_2)]$ be a short segment in $\V n$.
Let $\Gx(u_1)=\pom(u_1,v'_1)$ and let $\Gx(u_2)=\pom(u_2,v'_2)$.
Denote $w_i=(u_i,v_i)$ and $w'_i=(u_i,v'_i)$  for $i=1,2$. Since $[w_1;w_2]$ is a short segment,
by~(\ref{v:pomS_JednaA}) we have $|\pom(w_1)-\pom(w_2)|\leq \vare.$ 
The segment $[w_1;w_1']$ is almost vertical (in fact it is vertical).
If it is short then  $|\pom(w_1)-\pom(w_1')|\leq \vare$, by~(\ref{v:pomS_JednaA}).
If it is long then $\pom(w_1)=\pom(w_1')=0$, by~(\ref{v:pomS_JednaC}). The same is true for $[w_2;w_2']$.

If both segments $[w_1;w_1']$, $[w_2;w_2']$ are short then 
$|\Gx(u_1)-\Gx(u_2)|=|\pom(w_1')-\pom(w_2')|\leq 3\vare$. 
If the first one is long and the second one is short then $|\pom(w_1)-\pom(w_2)|=|\pom(w_2)|\leq \vare$
and we have
$|\pom(w_1')-\pom(w_2')|=|\pom(w_2')|\leq |\pom(w_2)|+\vare \leq \vare$. 
If both are long then $|\pom(w_1')-\pom(w_2')|=0$. 

Hence we have proved~(\ref{v:GOnGraph_JednaA}). Points (\ref{v:GOnGraph_JednaB}) and 
(\ref{v:GOnGraph_JednaC}) are proved in a similar way. Point (\ref{v:GOnGraph_Dva}) follows directly from~(\ref{v:pomS_Dva}).
%
%
\end{proof}


\begin{proof}[Proof of Theorem~\ref{vv:GHOnGraph}] Let $n_2$ be equal to $n_1$ from Theorem~\ref{v:GOnGraph}.
Take an arbitrary $n\geq n_2$ and the corresponding function $\fun \Gx {\p(\V n)} {\mathbb R}$ from Theorem~\ref{v:GOnGraph}. Define $\fun \Hy {\q(\V n)} {\mathbb R}$ in the following way: for each
$v\in \q(\V n)$ fix a point $(u,v)\in \V n$ and let $\Hy(v)=f(u,v)-\Gx(u).$ 

The arguments showing that (\ref{vv:GHOnGraph_Jedna}) and (\ref{vv:GHOnGraph_Dva}) are satisfied are similar
to those in the proof of Theorem~\ref{v:GOnGraph}. 
\end{proof}

\begin{proof}[Proof of Theorem~\ref{v:GHOnR}]
Take $n_2$ from Theorem~\ref{vv:GHOnGraph} and let $n_3=\max\{n_2, -[\log_2 \delta]\}$. Then
$1/2^{n_3}\leq \delta$. Let 
$n\geq n_3$ and take functions $\fun \Gx {\p(\V n)} {\mathbb R}$ and $\fun \Hy {\q(\V n)} {\mathbb R}$ from Theorem~\ref{vv:GHOnGraph}.

Denote the points of the $\p$ projection of the set $\V n$ by $x_1,\ldots,x_k$ with
$x_i<x_{i+1}$ for all $i$. Let $\gx(x_i) = \Gx(x_i)$ for each $i$.
On each interval $[x_i;x_{i+1}]$ such that $|x_i - x_{i+1}|<1/2^{n-1}$ extend $\gx$ linearly 
between the values $\gx(x_i)$ and $\gx(x_{i+1})$. If an interval $[x_i;x_{i+1}]$
is such that $|x_i - x_{i+1}|\geq 1/2^{n-1}$ then there exists an interval of the form
$I=[j/2^n;(j+1)/2^n)$ such that $x_i<j/2^n<(j+1)/2^n<x_{i+1}$ and $\p^{-1}(I)\cap K = \emptyset$.
On the intervals $[x_i;j/2^n]$ and $[(j+1)/2^n;x_{i+1}]$ extend $\gx$ as a constant, equal to
$\gx(x_i)$ and $\gx(x_{i+1})$, respectively. On the intervals $(-\infty;x_1]$ and $[x_k;\infty)$ extend
$\gx$ as a constant as well.

Denote the points of the $\q$ projection of the set $\V n$ by $y_1,\ldots,y_l$ with
$y_j<y_{j+1}$ for all $j$. Let $\hy(y_j) = \Hy(y_j)$ for each $j$. Extend $\hy$ to $\mathbb R$
in a similar way as $\gx$.

Every point $(x,y)$ from $K$ lies in a square $S=\left[{i}/{2^n};{(i+1)}/{2^n} \right) \times \left[ {j}/{2^n};{(j+1)}/{2^n} \right)$ and there is a point $(x_i,y_j)\in S\cap \V n.$
Let $x_i \leq x$ and $y_j \leq y$ for instance. If $|x_i - x_{i+1}|<1/2^{n-1}$ then
$x_i$ and $x_{i+1}$ are the vertical projections of the ends of an almost vertical segment in $\V n$.
So, by point~(\ref{vv:GHOnGraph_JednaB}) of Theorem~\ref{vv:GHOnGraph} we have $|\Gx(x_i) - \Gx(x_{i+1})|=
|\gx(x_i) - \gx(x_{i+1})|\leq 3\varepsilon.$ Since $\gx$ is linear on the interval
$[x_i ;x_{i+1}]$ we have $|\gx(x_i) - \gx(x)|\leq 3\varepsilon.$ 
If $|x_i - x_{i+1}|\geq 1/2^{n-1}$ then $\gx(x_i)=\gx(x)$. Similarly we show that
$|\hy(y_j) - \hy(y)|\leq 12\varepsilon.$ 

By point~(\ref{vv:GHOnGraph_JednaA}) we have
$|f(x_i,y_j)-\gx(x_i)-\hy(y_j)|\leq	4\varepsilon.$ Since $|(x,y)-(x_i,y_j)|<1/2^n\leq \delta$ it follows that
$|f(x,y)-f(x_i,y_j)|<\vare.$ Finally, $|f(x,y)-g(x)-h(y)|\leq 20\vare.$

The norms of the functions $\gx$ and $\hy$ are bounded because of point (\ref{vv:GHOnGraph_Dva}) of Theorem~\ref{vv:GHOnGraph}.
\end{proof}

\bibliographystyle{alpha}
\bibliography{Bibl_Trenklerova}

\end{document}